\newtheorem{theorem}{Theorem}[section]
\newtheorem{lemma}[theorem]{Lemma}
\newtheorem{corollary}[theorem]{Corollary}
\newtheorem{problem}[theorem]{Problem}
\theoremstyle{definition}
\newtheorem{example}[theorem]{Example}
\newtheorem{remark}[theorem]{Remark}
\begin{document}

\title[Minimal varieties and transcendental series]
{Minimal varieties of associative algebras\\
and transcendental series}

\author[Vesselin Drensky]
{Vesselin Drensky}

\address{Institute of Mathematics and Informatics,
Bulgarian Academy of Sciences,
1113 Sofia, Bulgaria}
\email{drensky@math.bas.bg}

\thanks{Partially supported by
Grant KP-06 N 32/1 of 07.12.2019 ``Groups and Rings -- Theory and Applications'' of the Bulgarian National Science Fund.}

\subjclass[2020]{16R10; 16S10; 20M05; 05A17; 11J81; 11P82; 30B10; 68R15.}
\keywords{minimal varieties of algebras, exponent of variety, Gelfand-Kirillov dimension, free semigroups, free algebras,
compositions, generating functions, lacunary series, transcendental series, transcendental numbers.}

\maketitle

\centerline{\it To Antonio Giambruno for his anniversary}

\begin{abstract}
A variety of associative algebras over a field of characteristic 0 is called minimal
if its codimension sequence grows much faster than the codimension sequence of any of its proper subvarieties.
By the results of Giambruno and Zaicev it follows that the number $b_n$ of minimal varieties of given exponent $n$ is finite.
Using methods of the theory of colored (or weighted) compositions of integers,
we show that the limit $\beta=\lim_{n\to\infty}\sqrt[n]{b_n}$ exists and can be expressed as the positive solution of
an equation $a(t)=0$ where $a(t)$ is an explicitly given power series.
Similar results are obtained for the number of minimal varieties with a given Gelfand-Kirillov dimension of their relatively free algebras of rank $d$.
It follows from classical results on lacunary power series that the generating function of the sequence $b_n$, $n=1,2,\ldots$, is transcendental.
With the same approach we construct examples of free graded semigroups $\langle Y\rangle$ with the following property.
If $d_n$ is the number of elements of degree $n$ of $\langle Y\rangle$,
then the limit $\delta=\lim_{n\to\infty}\sqrt[n]{d_n}$ exists and is transcendental.
\end{abstract}

\section*{Introduction}

Let $K\langle X\rangle=K\langle x_1,x_2,\ldots\rangle$ be the free associative algebra of countable rank over a field $K$.
The polynomial $f(x_1,\ldots,x_n)\in K\langle X\rangle$ is a polynomial identity for the algebra $R$ if
\[
f(r_1,\ldots,r_n)=0\text{ for all }r_1,\ldots,r_n\in R.
\]
The algebra $R$ is called a PI-algebra if it satisfies a nontrivial polynomial identity.
The class $\mathfrak R$ of all algebras satisfying a given system of identities is the variety of algebras defined by the system.
The sets $T(R)$ and $T({\mathfrak R})$ of all polynomial identities of the algebra $R$ and the variety $\mathfrak R$, respectively,
are invariant under the action of all endomorphisms of $K\langle X\rangle$ and are called T-ideals.
If $T({\mathfrak R})=T(R)$, then the variety $\mathfrak R$ is generated by the algebra $R$.
It is well known that every PI-algebra satisfies a multilinear polynomial identity of some degree $n$,
i.e., an identity from the vector space
\[
P_n=\text{span}\{x_{\sigma(1)}\cdots x_{\sigma(n)}\mid \sigma\in S_n\},
\]
where $S_n$ is the symmetric group of degree $n$.
Regev \cite{Re1} proved that the codimension sequence of a proper subvariety $\mathfrak R$ of the variety of all associative algebras
\[
c_n({\mathfrak R})=\dim(P_n/(P_n\cup T(\mathfrak R)),\quad n=1,2,\ldots,
\]
(and the codimension sequence $c_n(R)$ for a PI-algebra $R$)
is exponentially bounded, i.e., there exists a constant $a$ such that $c_n({\mathfrak R})\leq a^n$, $n=1,2,\ldots$.

When the base field $K$ is of characteristic 0 the identities of the algebra are determined by the multilinear ones.
Later we shall consider PI-algebras over a field $K$ of characteristic 0 only.
Amitsur conjectured that the limit
\[
\exp({\mathfrak R})=\lim_{n\to\infty}\sqrt[n]{c_n({\mathfrak R})}
\]
exists and is a nonnegative integer.
This conjecture was confirmed by Giambruno and Zaicev \cite{GZ1, GZ2}.

Kemer \cite{K1} developed the structure theory of T-ideals which was in the ground of his solution of the Specht problem
for the existence of a finite basis of the identities of any variety of associative algebra, see his book \cite{K2} for an account.
The key role of the theory of Kemer is played by the T-prime ideals
\[
T(M_m(K)),T(M_m(E)), T(M_{a,b}),\quad m\geq 1,1\leq a\leq b,
\]
where $M_m(K)$ and $M_m(E)$ are, respectively, the $m\times m$ matrix algebras with entries from the field $K$
and from the infinite dimensional Grassmann algebra $E$.
The algebra $M_{a,b}$ consists of block matrices
$\displaystyle \left(\begin{matrix}A_{11}&A_{12}\\
A_{21}&A_{22}\\
\end{matrix}\right)$, where $A_{11}$ and $A_{22}$ are, respectively, $a\times a$ and $b\times b$ matrices
with entries from the even component $E_0$ of $E$,
and $A_{12}$ and $A_{21}$ are, respectively, $a\times b$ and $b\times a$ matrices
with entries from the odd component $E_1$ of $E$.

The exponent $\exp({\mathfrak R})$ of the variety of algebras $\mathfrak R$ is one of the natural ways to measure
the complexity of the polynomial identities of $\mathfrak R$. The variety $\mathfrak R$ is minimal
if $\exp({\mathfrak R})>\exp({\mathfrak S})$ for all proper subvarieties $\mathfrak S$ of $\mathfrak R$.
Drensky \cite{D1, D2} conjectured that the varieties with T-ideals which are products of T-prime ideals are minimal.
This was confirmed by Giambruno and Zaicev \cite{GZ3, GZ4} who showed that a variety $\mathfrak R$ is minimal if and only if
its T-ideal $T({\mathfrak R})$ is a product of T-prime ideals.
The paper \cite{GZ3} considers the case of varieties of finite basic rank when the T-ideal of the minimal variety
is a product of T-ideals of matrix algebras with entries from the field.
The case of infinite basic rank \cite{GZ4} involves also some of the T-ideals $T(M_m(E))$ and $T(M_{a,b})$ in the product.
We refer to the book by Giambruno and Zaicev \cite{GZ5}
for an account of the asymptotical results and methods in the theory of PI-algebras.

It follows from the description of minimal varieties that there is a finite number of minimal varieties of a given exponent.
It is a natural question to ask how many are they.
Again, it is convenient to consider separately the cases of minimal varieties of finite basic rank
and arbitrary minimal varieties. Let $b_n^{(\text{\rm f.g. codim})}$ and $b_n^{(\text{\rm codim})}$ be, respectively,
the number of minimal varieties of exponent $n$ of finite and arbitrary basic rank.
Using methods of the theory of colored (or weighted) compositions of integers, it is easy to write the generating functions
\[
b^{(\text{\rm f.g. codim})}(t)=\sum_{n\geq 0}b_n^{(\text{\rm f.g. codim})}t^n\text{ and }
b^{(\text{\rm codim})}(t)=\sum_{n\geq 0}b_n^{(\text{\rm codim})}t^n.
\]
It seems that it is a hopeless problem to express explicitly the integers $b_n^{(\text{\rm f.g. codim})}$ and $b_n^{(\text{\rm codim})}$.
As an approximation to the problem we show that both limits
\[
\exp(\text{\rm f.g. codim})=\lim_{n\to\infty}\sqrt[n]{b_n^{(\text{\rm f.g. codim})}}\text{ and }
\exp(\text{\rm codim})=\lim_{n\to\infty}\sqrt[n]{b_n^{(\text{\rm codim})}}.
\]
exist. It follows from classical results on lacunary power series that the functions $b^{(\text{\rm f.g. codim})}(t)$
and $b^{(\text{\rm codim})}(t)$ transcendental. We believe that $\exp(\text{\rm f.g. codim})$ and $\exp(\text{\rm codim})$
are transcendental numbers but cannot prove it.

The most convenient way to measure the growth of a  finitely generated algebra $R$ is to use its Gelfand-Kirillov dimension $\text{GKdim}(R)$.
For relatively free algebras
\[
F_d({\mathfrak R})=K\langle x_1,\ldots,x_d\rangle/(K\langle x_1,\ldots,x_d\rangle\cap T({\mathfrak R}))
\]
the Gelfand-Kirillov dimension provides also information about the quantity of
the polynomial identities of $\mathfrak R$ in $d$ variables.
A combination of the results of Kemer \cite{K3} and Markov \cite{M} gives that for any proper subvariety $\mathfrak R$
of the variety of all associative algebras $\text{GKdim}(F_d({\mathfrak R}))$ is an integer.

Using our approach for the asymptotics of the number of minimal varieties of a given exponent, we obtain analogous results for
the number $b_n^{(d\text{\rm -f.g. generated})}$ of minimal varieties $\mathfrak R$ of finite basic rank such that for a fixed integer $d>1$
the Gelfand-Kirillov dimension of the $d$-generated relatively free algebra $\text{GKdim}(F_d({\mathfrak R}))$ is equal to $n$,
and the similar number $b_n^{(d\text{\rm -generated})}$ for all minimal varieties.
Again, the generating function of the sequence $b_n^{(d\text{\rm -f.g. generated})}$, $n=1,2,\ldots$, is transcendental
but we do not know whether the same holds for the sequence $b_n^{(d\text{\rm -generated})}$, $n=1,2,\ldots$.

With the same approach we construct examples of free graded semigroups $\langle Y\rangle$ with the following property.
If $g_n$ is the number of elements of degree $n$ of $\langle Y\rangle$,
then the limit $\displaystyle \gamma=\lim_{n\to\infty}\sqrt[n]{g_n}$ exists and is transcendental.
As in the case of minimal varieties in our examples the generating function of the sequence $g_n$, $n=0,1,2,\ldots$, is also transcendental.
Clearly, the same properties has the Hilbert series $H(K\langle Y\rangle,t)$ and its coefficients
of the free graded algebra $K\langle Y\rangle$ freely generated by the set $Y$ over a field $K$ of arbitrary characteristic.

The paper is organized as follows. In Section \ref{Section Colored compositions} we summarize the necessary facts on colored compositions,
Section \ref{Section Transcendental series} is devoted to the properties of lacunary power series.
The main results on the number of minimal varieties with given property are in Section \ref{Section Minimal varieties}.
Finally we give the examples of free graded semigroups and free associative algebras
discussed above.

\section{Colored compositions}\label{Section Colored compositions}

In this section we shall collect facts, most of them well known, which we shall need for the study of the number of minimal varieties of given exponent
and related problems. For a background on theory of compositions we refer to the book by Heubach and Mansour \cite{HeM}.

A composition $\nu=(\nu_1,\ldots,\nu_m)$ of a positive integer $n$
is a representation of $n$ as a sum of positive integers $\nu_1,\ldots,\nu_m$
taking into account the order of the summands. We shall be interested in colored (or weighted) $A$-compositions.
This means that the summands are from a given nonempty subset $A$ of $\mathbb N$ and the elements $\nu_i$ of $A$
participate in $A$ with some weights (or multiplicities) $a_{\nu_i}$, and the equal $\nu_i$'s are in different colors
in order to be distinguished.

For example, if $A=\{1,\text{\bf 2},\text{\it 2},3,5,6\}$, then the compositions
$(\text{\bf 2},1,\text{\it 2},\text{\bf 2},5)$ and $(\text{\it 2},1,\text{\bf 2},\text{\it 2},5)$ are different.

In the sequel we shall fix the notation
\[
a(t)=\sum_{k\geq 1}a_kt^k\text{ and } b(t)=\sum_{n\geq 0}b_nt^n
\]
for, respectively, the generating function of $A$ counting the multiplicities of its elements
and the generating function of the number of colored $A$-compositions of $n$.
If the elements of $A$ have a common divisor $d>1$, then all nonzero coefficients $b_n$ of the power series $b(t)$
have subscripts $n$ divisible by $d$ only.

If the elements of $A$ are coprime, then all coefficients $b_n$ are nonzero
for $n$ large enough. It is well known that if $r$ is the radius of convergence of the power series $b(t)$, then
\[
b_n\bowtie \frac{1}{r^n},\text{ i.e. }b_n=\frac{\vartheta(n)}{r^n},\quad \limsup_{n\to\infty}\sqrt[n]{\vartheta(n)}=1.
\]
It is a natural question what is the behavior of the function $\vartheta(n)$. In particular, is it true that
\[
\limsup_{n\to\infty}\sqrt[n]{b_n}=\liminf_{n\to\infty}\sqrt[n]{b_n}\text{ or, equivalently, whether }
\lim_{n\to\infty}\sqrt[n]{b_n}\text{ exists?}
\]
We shall see that $\displaystyle \lim_{n\to\infty}\sqrt[n]{b_n}$ does exist and shall show how to express it in terms of $a(t)$.

The following statement is well known, see, e.g. \cite[Section 3.5]{HeM} or Moser and Whitney \cite{MW}.

\begin{lemma}\label{generating function for colored compositions}
The generating function $b(t)$ which counts the number of colored $A$-compositions of $n$ is expressed
in terms of the generating function $a(t)$ of the multiset $A$ as
\[
b(t)=\frac{1}{1-a(t)}.
\]
\end{lemma}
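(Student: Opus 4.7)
The plan is to organize colored $A$-compositions of $n$ according to their number of parts $m$, and then recognize the resulting sum as a geometric series. For a fixed $m \geq 1$, the generating function
\[
a(t)^m = \sum_{\nu_1,\ldots,\nu_m \geq 1} a_{\nu_1}\cdots a_{\nu_m}\, t^{\nu_1+\cdots+\nu_m}
\]
has the property that the coefficient of $t^n$ is the number of ordered tuples $(\nu_1,\ldots,\nu_m)$ with each $\nu_i \in A$ (counted with its multiplicity $a_{\nu_i}$) and $\nu_1+\cdots+\nu_m = n$. By the very definition of a colored $A$-composition, this coefficient is exactly the number of colored $A$-compositions of $n$ into $m$ parts.

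Summing over $m$, the number $b_n$ of colored $A$-compositions of $n$ is obtained as the coefficient of $t^n$ in $\sum_{m\geq 0} a(t)^m$, where the $m=0$ term contributes $1$ (the empty composition of $0$). Hence, as formal power series,
\[
b(t) = \sum_{m\geq 0} a(t)^m.
\]
Because all summands of $A$ are positive integers, $a(t)$ has zero constant term, so for each fixed $n$ only the terms with $m \leq n$ contribute to the coefficient of $t^n$. Thus the geometric series makes sense formally and equals $1/(1-a(t))$, giving the claimed identity.

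The only point that needs care is not really an obstacle but a verification: one must check that the formal geometric series is well-defined, which reduces to the observation $a(0)=0$. Everything else is a direct unpacking of the definitions of multiplication of formal power series and of a colored composition, so no combinatorial bijection beyond the above part-by-part counting is needed.
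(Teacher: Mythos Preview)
Your argument is correct and is precisely the standard proof of this identity: count colored $A$-compositions by their number of parts, observe that $a(t)^m$ enumerates those with exactly $m$ parts, and sum the resulting geometric series, using $a(0)=0$ to ensure the formal sum is well defined. The paper itself does not supply a proof of this lemma; it simply states the result as well known and refers the reader to \cite{HeM} and \cite{MW}. So there is nothing to compare against, and your write-up would serve perfectly well as the omitted proof.
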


In the applications in Section \ref{Section Minimal varieties} the radius of convergence of the power series $a(t)$ is equal to 1.
We shall need the following lemma.

\begin{lemma}\label{Abel's theorem}
If the radius of convergence of the power series $\displaystyle a(t)=\sum_{k\geq 1}a_kt^k$ with nonnegative integer coefficients $a_k$
is equal to $1$, then the equation $a(t)=1$ has a unique positive solution.
\end{lemma}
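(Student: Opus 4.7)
The plan is to use the standard elementary analysis of a monotone continuous function on $[0,1)$: existence via the intermediate value theorem, uniqueness via strict monotonicity. The crucial ingredient that makes the argument work is that the coefficients are \emph{nonnegative integers}, not merely nonnegative reals.

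First, I would observe that $a(t)$ cannot be a polynomial: a polynomial has infinite radius of convergence, contradicting the hypothesis that the radius equals $1$. Hence infinitely many coefficients $a_k$ are nonzero, and since each $a_k$ is a nonnegative integer, each nonzero $a_k$ satisfies $a_k \geq 1$. Next, on the interval $[0,1)$ the power series defines a continuous function that is strictly increasing (because at least one $a_k$ is positive, so the termwise derivative $\sum k a_k t^{k-1}$ is strictly positive on $(0,1)$), with $a(0)=0$. This already delivers uniqueness of any positive solution in $[0,1)$, since $a$ can take the value $1$ at most once there.

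For existence, the main point is to show $\lim_{t\to 1^-} a(t) = +\infty$. Given any $M>0$, choose an index $K$ large enough that there are at least $M+1$ indices $k \leq K$ with $a_k \geq 1$. Then on $[0,1)$
\[
a(t) \;\geq\; \sum_{\substack{k\leq K\\ a_k\geq 1}} a_k t^k \;\geq\; \sum_{\substack{k\leq K\\ a_k\geq 1}} t^k,
\]
and letting $t\to 1^-$ this lower bound tends to the number of such indices, which is at least $M+1$. Hence $\liminf_{t\to 1^-} a(t) \geq M$ for every $M$, proving $a(t)\to+\infty$. Combined with $a(0)=0$ and continuity, the intermediate value theorem furnishes a point $t_0\in(0,1)$ with $a(t_0)=1$, and this $t_0$ is unique by the monotonicity already established. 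Finally, I would rule out solutions $t\geq 1$: since $a$ has nonnegative coefficients and radius $1$, at $t=1$ the series either diverges or equals $\lim_{t\to 1^-}a(t)=+\infty$, and for $t>1$ the series certainly diverges; in either case the equation $a(t)=1$ has no solution outside $(0,1)$.

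The only genuine subtlety is step showing divergence at the boundary: for general nonnegative real coefficients $a_k$ with radius $1$, the value $\lim_{t\to 1^-} a(t)$ may be any nonnegative number (possibly finite and less than $1$), and then the claim would fail. The integrality assumption is what forces the nonzero coefficients to be bounded below by $1$ and hence the sum to diverge at $t=1$; this is the one place where the hypothesis is used in an essential way.
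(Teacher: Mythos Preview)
Your proof is correct and follows essentially the same approach as the paper: continuity and strict monotonicity of $a(t)$ on $[0,1)$, $a(0)=0$, divergence as $t\to 1^-$, and the intermediate value theorem. The only cosmetic difference is that the paper invokes Abel's theorem to obtain $\lim_{t\to 1^-}a(t)=\sum_k a_k=\infty$, whereas you give a direct elementary bound; your version is slightly more self-contained and also explicitly rules out $t\geq 1$, but the underlying argument is the same.
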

\begin{proof}
By the theorem of Abel for power series, if the series $\displaystyle a(t)=\sum_{k\geq 1}a_kt^k$ has a radius of convergence equal to 1
and the series $\displaystyle \sum_{k\geq 1}a_k$ converges, then
\[
\lim_{t\to 1^{-}}a(t)=\sum_{k\geq 1}a_k.
\]
The same holds when the series $\displaystyle \sum_{k\geq 1}a_k$ diverges to infinity. In our case this gives that
the function $a(t)$ is continuous in the interval $[0,1)$, strictly monotonically increasing
and hence takes exactly once all values from $a(0)=0$ to infinity.
In particular, the equation $a(t)=1$ has a unique solution in the interval $(0,1)$.
\end{proof}

\begin{remark}\label{radius of convergence essential condition}
The following example shows that the condition
that the radius of convergence of the power series with nonnegative integer coefficients is equal to 1
is essential in Lemma \ref{Abel's theorem}. We start with the Riemann $\zeta$-function
\[
\zeta(s)=\sum_{k\geq 1}\frac{1}{k^s}
\]
and the well known equality
\[
\zeta(2n)=\frac{(-1)^{n+1}B_{2n}(2\pi)^{2n}}{2(2n)!},\quad n\geq 1,
\]
where $B_{2n}$ is the $2n$-th Bernoulli number. It is also well known that $\zeta(2n)<2$, $n\geq 1$.
For example,
\[
\zeta(2)=1+\frac{1}{2^2}+\frac{1}{3^2}+\cdots =\frac{\pi^2}{6}=1.6449\ldots,
\]
\[
\zeta(4)=1+\frac {1}{2^4}+\frac {1}{3^4}+\cdots =\frac{\pi^4}{90}=1.0823\ldots.
\]
Now we consider the power series
\[
a^{(n)}(t)=\sum_{k\geq 2}\left\lceil\frac{2^k}{k^{2n}}\right\rceil t^k,\quad n\geq 1,
\]
where $\lceil m\rceil$ is the integer part of $m$. Obviously the radius of convergence of $a^{(n)}(t)$
is equal to $\displaystyle\frac{1}{2}$ and
\[
a^{(n)}\left(\frac{1}{2}\right)<\zeta(2n)-1<1.
\]
Hence the equation $a^{(n)}(t)=1$ does not have a positive solution.
\end{remark}

The following easy theorem is the main tool in our further considerations.
We provide a detailed proof for completeness of the exposition.

\begin{theorem}\label{existence of exponent}
Let the multiset $A$ with generating function $a(t)$ have the property
that the greatest common divisor of its elements is equal to $1$.
Let the series $a(t)$ have a radius of convergence $\varrho$ equal to $1$
or $\varrho<1$ and $a(\varrho)\geq 1$.
Then for the coefficients of the generating function $b(t)$ of the $A$-compositions the limit
\[
\beta_A=\lim_{n\to\infty}\sqrt[n]{b_n}
\]
exists and $\beta^{-1}_A$ is equal to the positive solution $\alpha$ of the equation $a(t)=1$.
\end{theorem}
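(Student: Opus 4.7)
The plan is to combine the factorization $b(t)=1/(1-a(t))$ supplied by Lemma~\ref{generating function for colored compositions} with analytic input (the Cauchy--Hadamard formula) and a combinatorial supermultiplicativity argument (Fekete's lemma) to produce both the value and the existence of the limit.

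First I would produce the positive solution $\alpha$ of $a(t)=1$. When $\varrho=1$ this is already delivered by Lemma~\ref{Abel's theorem}. When $\varrho<1$ and $a(\varrho)\geq 1$, the function $a$ is continuous and strictly increasing on the closed interval $[0,\varrho]$ (the coefficients are non-negative integers and at least one is positive) with $a(0)=0$ and $a(\varrho)\geq 1$, so the intermediate value theorem gives a unique $\alpha\in(0,\varrho]$ with $a(\alpha)=1$.

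Next I would identify $\alpha$ as the radius of convergence of $b(t)$. For any complex $t$ with $|t|<\alpha$, non-negativity of the coefficients of $a$ and the triangle inequality give
\[
|a(t)|\leq a(|t|)<a(\alpha)=1,
\]
so $b(t)=1/(1-a(t))$ is analytic on the open disk $\{|t|<\alpha\}$. On the other hand $a(t)\to 1$ as $t\to\alpha^{-}$ along the positive real axis, so $b(t)\to +\infty$ there, forcing a singularity at $t=\alpha$. The Cauchy--Hadamard formula then yields $\limsup_{n\to\infty}\sqrt[n]{b_n}=1/\alpha$.

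The main obstacle — and what goes beyond Cauchy--Hadamard — is upgrading the $\limsup$ to a genuine $\lim$. Here I would exploit the combinatorial meaning of $b_n$: concatenating a colored $A$-composition of $m$ with one of $n$ gives a colored $A$-composition of $m+n$, and distinct pairs yield distinct concatenations, so
\[
b_{m+n}\geq b_m\, b_n.
\]
Because $\gcd A=1$, the numerical semigroup generated by the support of $A$ contains every sufficiently large integer, so $b_n>0$ for all $n\geq n_0$. Taking logarithms, the sequence $(\log b_n)_{n\geq n_0}$ is superadditive, and Fekete's lemma implies that $\lim_{n\to\infty}(\log b_n)/n$ exists in $(-\infty,+\infty]$ and equals $\sup_{n\geq n_0}(\log b_n)/n$. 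Exponentiating, $\lim_{n\to\infty}\sqrt[n]{b_n}$ exists; combined with the previous paragraph this common value must be $1/\alpha$, which is the assertion of the theorem.
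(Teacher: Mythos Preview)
Your argument is correct and follows the same overall strategy as the paper: identify $\alpha$ as the unique positive root of $a(t)=1$, recognise it as the radius of convergence of $b(t)=1/(1-a(t))$, use the numerical-semigroup fact (from $\gcd A=1$) that $b_n>0$ for all large $n$, and then exploit the supermultiplicativity $b_{m+n}\geq b_m b_n$ coming from concatenation of compositions. The only substantive difference is in the last step: where the paper carries out an explicit $\varepsilon$-argument (picking $n_1$ with $b_{n_1}>(\beta-\varepsilon)^{n_1}$ and bootstrapping via $b_{pn_1+q}\geq b_{n_1}^{p-1}$), you invoke Fekete's lemma on the superadditive sequence $(\log b_n)_{n\geq n_0}$. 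Your packaging is shorter and more conceptual; the paper's version is more self-contained but is really just Fekete's lemma proved in situ. One small point worth making explicit in your write-up is that Fekete applies even though superadditivity is only available for $m,n\geq n_0$ (e.g.\ extend by $\log b_n=-\infty$ for $n<n_0$, which preserves the inequality); otherwise the proof is complete as written.
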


\begin{proof}
The set $S$ of all integers $n$ which allow $A$-compositions forms an additive subsemigroup of ${\mathbb N}_0$
generated by the elements of $A$. By a well known result on numerical semigroups, see, e.g. \cite{RG-S},
$S$ is a numerical semigroup since the greatest common divisor of the elements in $A$ is equal to 1.
Hence there exists an $n_0$ such that all integers $n\geq n_0$ belong to $S$.
The radius of convergence of the power series $\displaystyle \frac{1}{1-z}$ is equal to 1.
Since $\displaystyle b(t)=\frac{1}{1-a(t)}$ by Lemma \ref{generating function for colored compositions},
the radius of convergence of the series $b(t)$
is equal to the unique positive solution $\alpha$ of the equation $a(t)=1$.
As we mentioned in the beginning of the section,
\[
b_n=\beta^n\vartheta(n),\quad
\beta=\frac{1}{\alpha},\quad \limsup_{n\to\infty}\sqrt[n]{\vartheta(n)}=1.
\]
Hence $b_n<(\beta+\varepsilon)^n$ for any $\varepsilon>0$ and $n$ sufficiently large.
Let $0<\varepsilon<\beta$. There is a sequence $n_1<n_2<\cdots$ such that
\[
b_{n_i}>(\beta-\varepsilon)^{n_i},\quad i=1,2,\ldots.
\]
From two $A$-compositions $\nu=(\nu_1,\ldots,\nu_m)$ and $\nu'=(\nu'_1,\ldots,\nu'_{m'})$ of $n_1$
we obtain an $A$-composition $(\nu,\nu')=(\nu_1,\ldots,\nu_m,\nu'_1,\ldots,\nu'_{m'})$ of $2n_1$.
Hence $b_{2n_1}\geq b_{n_1}^2$ and, similarly,
\[
b_{pn_1}\geq b_{n_1}^p>(\beta-\varepsilon)^{pn_1},\quad \sqrt[pn_1]{b_{pn_1}}>\beta-\varepsilon,\quad p=1,2,\ldots.
\]
Since $b_{n_0},b_{n_0+1},\cdots,b_{2n_0}\geq 1$, with the same arguments we obtain that
\[
b_{n_1+n_0},b_{n_1+n_0+1},\ldots,b_{n_1+2n_0}\geq b_{n_1},
\]
\[
b_{(n_1+n_0)+n_0},b_{(n_1+n_0)+n_0+1},\ldots,b_{(n_1+n_0)+2n_0}=b_{n_1+3n_0}\geq b_{n_1},
\]
\[
b_{n_1+q}\geq b_{n_1}>(\beta-\varepsilon)^{n_1},\quad q\geq n_0.
\]
Hence
\[
b_{2n_1+q}\geq b_{n_1}>(\beta-\varepsilon)^{n_1},\quad b_{pn_1+q}\geq b_{(p-1)n_1}>(\beta-\varepsilon)^{(p-1)n_1}\quad p\geq 2, q>0,
\]
and for $p\geq 2$, $0\leq q<n_1$, $n=pn_1+q>2n_1$,
\[
b_n>(\beta-\varepsilon)^{(p-1)n_1}=\frac{(\beta-\varepsilon)^n}{(\beta-\varepsilon)^{n_1+q}},
\quad \sqrt[n]{b_n}>(\beta-\varepsilon)\sqrt[n]{\frac{1}{(\beta-\varepsilon)^{n_1+q}}}.
\]
Since
\[
\lim_{n\to\infty}\sqrt[n]{\frac{1}{(\beta-\varepsilon)^{n_1+q}}}=1,
\]
we obtain that
\[
\lim_{n\to\infty}\sqrt[n]{b_n}=\beta
\]
and this completes the proof.
\end{proof}

\begin{remark}
If, as in Remark \ref{radius of convergence essential condition}, the radius of convergence $\varrho$ of the power series $a(t)$
with nonnegative integer coefficients is less than 1 and $a(\varrho)<1$
(or the series diverges to infinity for any $t>0$), then Theorem \ref{existence of exponent} is still valid if we replace
the positive solution of the equation $a(t)=1$ with $\varrho$ or with $0$ in the case of divergence.
\end{remark}

\begin{remark}
If the multiset $A$ with generating function $a(t)$ has the property
that the greatest common divisor of its elements is equal to $d$, then we can rewrite $a(t)$ and $b(t)$ in the form
\[
a(t)=\sum_{k\geq 1}a_{dk}t^{dk},\quad b(t)=\sum_{n\geq 0}b_{dn}t^{dn}
\]
and, as in Theorem \ref{existence of exponent}, we obtain the existence of the limit
\[
\beta_A=\lim_{n\to\infty}\sqrt[dn]{b_{dn}}.
\]
\end{remark}

\section{Lacunary series and transcendentance}\label{Section Transcendental series}

Recall that the power series
\[
a(t)=\sum_{k\geq 1}a_kt^k
\]
is lacunary if there exist two sequences $p_1<p_2<\cdots$ and $q_1<q_2<\cdots$ of positive integers such that
\[
p_1\leq q_1<p_2\leq q_2<\cdots,
\]
$a_k=0$ for $k=q_i+1,q_i+2,\ldots,p_{i+1}-1$, $i=1,2,\ldots$, and $\displaystyle \lim_{i\to\infty}(p_{i+1}-q_i)=\infty$.
The lacunary series $a(t)$ is strongly lacunary if, in the above notation, $\displaystyle \lim_{i\to\infty}\frac{p_{i+1}}{q_i}=\infty$.

The following theorem can be found in Mahler \cite[p. 42]{Ma2}.

\begin{theorem}\label{transcendental lacunary series}
Every lacunary series $a(t)$ with integer coefficients is transcendental over ${\mathbb Q}(t)$.
\end{theorem}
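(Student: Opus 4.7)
The plan is to rule out algebraicity by showing that the lacunary gap structure is incompatible with any linear recurrence with polynomial coefficients; since every algebraic power series over $\mathbb{Q}(t)$ is holonomic, this will force $a(t)$ to be transcendental.

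First I would assume for contradiction that $a(t)$ is algebraic over $\mathbb{Q}(t)$, so that a polynomial relation $P(t, a(t)) = 0$ holds for some nonzero $P \in \mathbb{Q}[t, y]$. Differentiating this relation successively and using that $\mathbb{Q}(t)[a(t)]$ is a finite-dimensional $\mathbb{Q}(t)$-vector space produces a linear dependence of $a(t), a'(t), \ldots, a^{(r)}(t)$ over $\mathbb{Q}(t)$. Clearing denominators yields a linear ODE $\sum_{j=0}^{r} P_j(t) a^{(j)}(t) = 0$ with $P_j \in \mathbb{Q}[t]$ and $P_r \not\equiv 0$. Extracting the coefficient of $t^k$ in this identity translates it into a recurrence
\[
Q_s(k) a_{k+s} + Q_{s-1}(k) a_{k+s-1} + \cdots + Q_0(k) a_k = 0
\]
with $Q_j \in \mathbb{Q}[k]$, $Q_s \not\equiv 0$, valid for every sufficiently large $k$.

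Next I would invoke the lacunary hypothesis. Since $Q_s$ has only finitely many zeros and the recurrence takes effect past some threshold, there is an index $N$ beyond which both the recurrence holds and $Q_s(k) \neq 0$. By the assumption $\lim_{i\to\infty}(p_{i+1}-q_i) = \infty$, some $i$ satisfies $q_i \geq N$ and $p_{i+1} - q_i > s$, so $a_{q_i + 1} = a_{q_i + 2} = \cdots = a_{q_i + s} = 0$ by the definition of lacunary. Setting $k = q_i + 1$ in the recurrence gives $Q_s(k)\, a_{k+s} = 0$, whence $a_{q_i + s + 1} = 0$; a straightforward induction then yields $a_m = 0$ for every $m > q_i$. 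This contradicts the existence of nonzero coefficients in any later block $[p_{i+1}, q_{i+1}]$, which the lacunary decomposition into infinitely many blocks supplies.

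I expect the only genuinely technical step to be the algebraic-implies-holonomic implication together with the passage to a recurrence whose leading coefficient does not vanish on all sufficiently large $k$; this is classical material going back to Comtet and appears in any account of $D$-finite series, but it must be carried out with care in order to control the finitely many exceptional values of $k$. Once this reduction is in hand, nothing about the distribution of nonzero coefficients inside a block is used, so the argument applies uniformly to lacunary and strongly lacunary series alike and uses only the weakest form of the gap growth condition.
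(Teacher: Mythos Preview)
The paper does not prove this theorem; it merely cites Mahler's lecture notes for the result, so there is no in-paper argument to compare against. Your approach---algebraic $\Rightarrow$ holonomic $\Rightarrow$ P-recursive, then propagating a run of $s$ consecutive zeros forward through the recurrence once the leading polynomial $Q_s$ has passed its finitely many roots---is the standard modern proof and is correct in substance.

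One small point deserves a sentence of care. The paper's definition of ``lacunary'' specifies only that the gaps $(q_i,p_{i+1})$ have lengths tending to infinity; it does not literally assert that any coefficient inside a block $[p_i,q_i]$ is nonzero. Read verbatim, a polynomial satisfies the definition, and of course a polynomial is algebraic. Your final line (``the lacunary decomposition into infinitely many blocks supplies'' nonzero coefficients in a later block) therefore invokes something not strictly contained in the stated hypothesis. The fix is trivial: either note that the theorem is meant for series with infinitely many nonzero coefficients (as in Mahler's formulation), or phrase the conclusion as ``$a(t)$ is a polynomial,'' which already contradicts the intended notion of a lacunary series. With that caveat made explicit, your argument is complete and self-contained, and in fact more informative than the bare citation in the paper.
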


The transcendence of the series $a(t)$ does not guarantee that the positive solution of the equation $a(t)=1$
is a transcendental number. We shall use the following partial case of a theorem of Cohn \cite{C}.

\begin{theorem}\label{evaluations of lacunary series}
Let
\[
a(t)=\sum_{i\geq 1}a_{k_i}t^{k_i},\quad k_i\in{\mathbb N},\quad 0< k_1<k_2<\cdots,
\]
be a strongly lacunary series such that
\[
\lim_{i\to\infty}\frac{k_{i+1}}{\log(\max\{a_{k_1},\ldots, a_{k_i}\})}=\infty.
\]
Then $a(\alpha)$ is transcendental for every nonzero algebraic $\alpha$ within the circle of convergence of $a(t)$.
\end{theorem}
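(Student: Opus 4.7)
The plan is to argue by contradiction using the Thue--Siegel--Roth theorem in the form appropriate for approximation of algebraic numbers by algebraic numbers in a fixed number field (LeVeque, Ridout--Schmidt). Assume $\alpha$ is a nonzero algebraic number strictly inside the disk of convergence of $a(t)$ and that $\beta=a(\alpha)$ is algebraic. Consider the truncated sums
\[
S_N=\sum_{i=1}^{N}a_{k_i}\alpha^{k_i},
\]
which all lie in the number field $L=\mathbb{Q}(\alpha,\beta)$, together with the tails
\[
\beta-S_N=\sum_{i>N}a_{k_i}\alpha^{k_i}.
\]

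First I would estimate the size of this error. Choose $r$ with $|\alpha|<r<\varrho$, where $\varrho$ is the radius of convergence of $a(t)$; then $a_{k_i}r^{k_i}$ is bounded, so the tail is dominated by a geometric series and there exist constants $C>0$ and $0<\theta<1$, depending only on $\alpha$, with
\[
|\beta-S_N|\le C\,\theta^{k_{N+1}}.
\]
Next I would bound the height of $S_N$. Setting $M_N:=\max\{a_{k_1},\dots,a_{k_N}\}$, a standard estimate in $L$ using $a_{k_i}\le M_N$ and the bounded height of $\alpha$ yields
\[
\log H(S_N)\le c_1k_N+c_2\log M_N+c_3,
\]
with constants depending only on $\alpha$.

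Combining these two bounds produces an irrationality exponent
\[
\mu_N:=\frac{-\log|\beta-S_N|}{\log H(S_N)}\ge\frac{k_{N+1}\log(1/\theta)-\log C}{c_1k_N+c_2\log M_N+c_3}.
\]
The strong lacunarity condition $k_{N+1}/k_N\to\infty$ and the hypothesis $k_{N+1}/\log M_N\to\infty$ together force $\mu_N\to\infty$. This contradicts the Roth-type theorem, which asserts that $\beta$ cannot be approximated by distinct elements $\gamma\in L$ with $\mu>2+\varepsilon$ infinitely often. One must also rule out $S_N=\beta$ for infinitely many $N$: this would force $a_{k_i}\alpha^{k_i}=0$ for all large $i$, but $\alpha\ne 0$ and the $a_{k_i}$ are positive, a contradiction.

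The main obstacle, as I see it, is not any single estimate but the choice of the correct Diophantine input: the classical Roth theorem deals with rational approximations, whereas here the approximants $S_N$ live in the fixed field $L$, so one needs the generalization to algebraic approximations of bounded degree. Once that is secured, the height estimate for $S_N$ must be made uniform in $N$ with explicit dependence on both $k_N$ and $\log M_N$; the two lacunarity assumptions of the statement are then tailor-made to dominate these two contributions simultaneously, and the remainder of the argument is mechanical.
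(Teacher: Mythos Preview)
The paper does not supply its own proof of this theorem: it is quoted verbatim as a special case of a result of Cohn (1946), with a pointer to Mahler's later strengthening, and is then used as a black box. So there is nothing in the paper to compare your argument against.

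That said, your outline is essentially the standard route to such results and is basically sound. Two remarks. First, invoking Roth (or its number-field extensions) is more than you need: since $\mu_N\to\infty$, Liouville's elementary inequality $|\beta-\gamma|\gg H(\gamma)^{-D}$ for $\gamma\in L=\mathbb{Q}(\alpha)$ and fixed $D=[\mathbb{Q}(\alpha,\beta):\mathbb{Q}]$ already yields the contradiction. This is historically consistent---Cohn's paper predates Roth's theorem by nearly a decade---and it also sidesteps the issue you flag about needing the right Diophantine input. Second, your disposal of the degenerate case is slightly off: $S_N=\beta$ for \emph{infinitely many} $N$ does not by itself force $a_{k_i}\alpha^{k_i}=0$ for all large $i$. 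What you actually need is the dichotomy: either $S_N\neq\beta$ for infinitely many $N$ (and then Liouville applies along that subsequence), or $S_N=\beta$ for all sufficiently large $N$, in which case consecutive differences give $a_{k_{N+1}}\alpha^{k_{N+1}}=0$, hence $a_{k_{N+1}}=0$, contradicting the implicit assumption that the series has infinitely many nonzero terms. With these two adjustments your sketch goes through.
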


Mahler \cite{Ma1} removed the condition for the limit for $\log(\max\{a_{k_1},\ldots, a_{k_i}\})$
and proved a version of Theorem \ref{evaluations of lacunary series} for strongly lacunary series.

Maybe the most famous strongly lacunary series is
\[
a(t)=\sum_{k\geq 1}t^{k!}.
\]
Its evaluation
\[
a\left(\frac{1}{10}\right)=\sum_{k\geq 1}\frac{1}{10^{k!}}
\]
is equal to the first explicitly constructed transcendental number found by Liouville \cite{L}.
For a survey on the recent results on transcendence of values of power series see, e.g. Kaneko \cite{Ka}.
As immediate consequences of Theorem \ref{evaluations of lacunary series} and Lemma \ref{Abel's theorem} we obtain the following.

\begin{corollary}\label{transcendental radius of convergence}
Let $a(t)$ be a power series with nonnegative integer coefficients which satisfies the conditions of Theorem \ref{evaluations of lacunary series}
and has a radius of convergence equal to $1$.
Then the positive solution of the equation $a(t)=1$ is a transcendental number.
\end{corollary}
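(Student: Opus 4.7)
The plan is a direct combination of the two inputs cited just before the corollary, so the proof should be very short. First I would invoke Lemma \ref{Abel's theorem}: the series $a(t)$ has nonnegative integer coefficients and radius of convergence $1$, so the equation $a(t)=1$ has a unique positive solution $\alpha$, and that lemma's argument in fact places $\alpha$ in the open interval $(0,1)$. In particular $\alpha$ is a nonzero real number lying strictly inside the disk of convergence of $a(t)$, which is precisely the set in which Theorem \ref{evaluations of lacunary series} is applicable.

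Next I would argue by contradiction. Suppose $\alpha$ is algebraic. Since $\alpha\neq 0$ and $|\alpha|<1$, and since $a(t)$ satisfies the hypotheses of Theorem \ref{evaluations of lacunary series} by assumption, that theorem forces $a(\alpha)$ to be transcendental. But $\alpha$ was chosen so that $a(\alpha)=1$, a rational number, and in particular algebraic. This contradiction shows that $\alpha$ must be transcendental, which is the conclusion.

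There is essentially no obstacle in this argument; the only step that needs a moment's attention is checking that the value $\alpha$ supplied by Lemma \ref{Abel's theorem} is really admissible as an input for Theorem \ref{evaluations of lacunary series}, i.e.\ that it is both nonzero and strictly inside (not on the boundary of) the unit disk of convergence. Both requirements are explicit in the proof of Lemma \ref{Abel's theorem}, since $a(0)=0<1$ and $a$ is strictly increasing on $[0,1)$ with $\alpha$ produced inside $(0,1)$. Once this is recorded, the contradiction ``$1$ is transcendental'' closes the argument immediately.
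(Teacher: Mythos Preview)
Your proof is correct and is exactly what the paper intends: it states the corollary as an ``immediate consequence'' of Theorem \ref{evaluations of lacunary series} and Lemma \ref{Abel's theorem} without giving further details, and your argument simply spells out that consequence.
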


\begin{corollary}\label{transcendental radius of convergence for A-compositions}
Let $a(t)$ be a power series with nonnegative integer coefficients which satisfies the conditions of Corollary \ref{transcendental radius of convergence}
and has a radius of convergence equal to $1$. Let $A$ be the associated to $a(t)$ multiset. Then for the number $b_n$ of the $A$-compositions of $n$
\[
\beta_A=\lim_{n\to\infty}\sqrt[n]{b_n}
\]
is a transcendental number.
\end{corollary}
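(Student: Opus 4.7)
The plan is to chain together the two results of the excerpt: Theorem~\ref{existence of exponent} provides the existence of $\beta_A$ and identifies its reciprocal with the positive solution $\alpha$ of the equation $a(t)=1$, while Corollary~\ref{transcendental radius of convergence} asserts that this $\alpha$ is a transcendental number. What remains is then a one-line observation about how transcendence transfers from $\alpha$ to $1/\alpha$.

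More precisely, the first step is to invoke Theorem~\ref{existence of exponent}. Its hypotheses are met: by the assumption inherited from Corollary~\ref{transcendental radius of convergence}, the series $a(t)$ has nonnegative integer coefficients and radius of convergence $1$, so in particular $a(\varrho)=a(1)\geq 1$ (indeed $a(1)=+\infty$ whenever the coefficient sequence is unbounded, and otherwise the monotone Abel argument of Lemma~\ref{Abel's theorem} applies); the coprimality of the elements of $A$ is an implicit standing hypothesis that can be taken as part of the definition of the associated multiset (otherwise one argues on the subsemigroup $S$ via the final remark of Section~\ref{Section Colored compositions}). Thus Theorem~\ref{existence of exponent} guarantees both that
\[
\beta_A=\lim_{n\to\infty}\sqrt[n]{b_n}
\]
exists and that $\beta_A=1/\alpha$, where $\alpha\in(0,1)$ is the unique positive root of $a(t)=1$.

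The second step is to invoke Corollary~\ref{transcendental radius of convergence}: since $a(t)$ satisfies the strong lacunarity plus growth hypothesis of Theorem~\ref{evaluations of lacunary series} and has radius of convergence $1$, the positive solution $\alpha$ of $a(t)=1$ is transcendental. The final step is purely algebraic: the algebraic numbers form a field, hence the reciprocal of any nonzero algebraic number is algebraic; equivalently, if $\alpha$ were algebraic and satisfied a polynomial $p(x)\in\mathbb{Q}[x]$ of degree $m$, then $\beta_A=1/\alpha$ would satisfy the reverse polynomial $x^{m}p(1/x)\in\mathbb{Q}[x]$. Since $\alpha$ is transcendental, so is $\beta_A$, which is exactly the desired conclusion.

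There is really no serious obstacle in this corollary; it is a pure concatenation of the preceding two results with the elementary remark that transcendence is preserved under inversion. The only place where one should be careful is the bookkeeping of hypotheses — one must verify that the lacunarity/growth assumption used for Corollary~\ref{transcendental radius of convergence} is compatible with the coprimality needed for Theorem~\ref{existence of exponent}, but this can always be arranged (if the gcd of the support of $a(t)$ were $d>1$, one would rescale the grading by $d$ and apply the analogous statement in the last remark of Section~\ref{Section Colored compositions}, obtaining the existence and transcendence of $\lim_{n\to\infty}\sqrt[dn]{b_{dn}}$ without any change in the argument).
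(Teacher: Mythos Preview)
Your proof is correct and follows exactly the route the paper intends: the paper presents this corollary as an ``immediate consequence'' without writing out a proof, and your chaining of Theorem~\ref{existence of exponent} (to identify $\beta_A=1/\alpha$) with Corollary~\ref{transcendental radius of convergence} (to get transcendence of $\alpha$) plus the field property of algebraic numbers is precisely what is meant. If anything, you are more careful than the paper about the coprimality hypothesis needed for Theorem~\ref{existence of exponent}, which is indeed implicit in the statement (otherwise the limit over all $n$ would not exist).
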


\begin{example}\label{Liouville number}
If we take $A=\{1!,2!,3!,\ldots\}$, then Corollary \ref{transcendental radius of convergence for A-compositions}
gives that
\[
\beta_A=\lim_{n\to\infty}\sqrt[n]{b_n}
\]
is a transcendental number.
\end{example}

\section{Minimal varieties of algebras}\label{Section Minimal varieties}

This section contains the main results of the project.
In what follows $\mathfrak R$ is a minimal variety of associative algebras over a field $K$ of characteristic 0.
This means that the T-ideal $T({\mathfrak R})$ is a product of the T-prime ideals
$T(M_m(K))$, $T(M_m(E))$, $m\geq 1$, $T(M_{a,b})$, $1\leq a\leq b$. The variety is of finite basic rank if
$T({\mathfrak R})$ is a product of T-ideals $T(M_{m_i}(K))$ only.

The following statement is proved in the book by Giambruno and Zaicev \cite[Corollary 8.5.5, p. 210]{GZ5}.
(As the authors mention in \cite{GZ5}, the freeness of this semigroup
can be deduced also from the result of Bergman and Lewin \cite[Theorem 7]{BeL}).

\begin{lemma}\label{free graded semigroup of minimal varieties}
The T-ideals of the minimal varieties
form a free graded semigroup freely generated by the T-prime ideals. The grading is defined by
\[
\deg T({\mathfrak R})=\exp({\mathfrak R}).
\]
\end{lemma}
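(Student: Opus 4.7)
The plan is to deduce the lemma from the Giambruno--Zaicev characterization of minimal varieties together with a uniqueness-of-factorization argument for T-prime ideals. First, I would recall what has already been cited: by the theorems of Giambruno and Zaicev \cite{GZ3, GZ4}, the T-ideal of every minimal variety $\mathfrak R$ is a product of T-prime ideals chosen from the list $T(M_m(K))$, $T(M_m(E))$, $T(M_{a,b})$, and conversely every such product arises this way. Since the multiplication of T-ideals is associative, the collection of T-ideals of minimal varieties is automatically closed under multiplication, so it forms a semigroup with the T-prime ideals as a generating set. It only remains to equip it with a grading and to verify that the generating set is free.

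Second, for the grading I would use the additivity of the exponent. The exponents of the generators are classically known: $\exp(M_m(K))=m^2$, $\exp(M_m(E))=2m^2$ and $\exp(M_{a,b})=(a+b)^2$. Combining this with the formula of Giambruno and Zaicev for the exponent of a product of T-ideals (for minimal varieties $\mathfrak R_1,\ldots,\mathfrak R_r$ one has $\exp(\mathfrak R_1\cdots\mathfrak R_r)=\exp(\mathfrak R_1)+\cdots+\exp(\mathfrak R_r)$), the map $T(\mathfrak R)\mapsto \exp(\mathfrak R)$ is a semigroup homomorphism into $(\mathbb N,+)$. Hence $\deg T(\mathfrak R)=\exp(\mathfrak R)$ is a well-defined grading, and each graded piece is finite (the summands are drawn from the finite set of T-primes with exponent at most $n$).

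The main step, and the part I expect to be hardest, is freeness: if
\[
P_1\cdots P_r=Q_1\cdots Q_s
\]
with all $P_i,Q_j$ T-prime ideals from the Kemer list, then $r=s$ and $P_i=Q_i$ for every $i$. I would handle this either by invoking the cited Bergman--Lewin result \cite[Theorem 7]{BeL}, which establishes freeness in a more general semigroup of ideals and specializes to our situation, or by a direct structural argument based on the Kemer/Giambruno--Zaicev theory: the factor $P_1$ is determined by the ``leading'' Kemer component of the product, and once $P_1$ is identified one cancels on the left (using that left multiplication by a T-prime is injective on minimal T-ideals) and proceeds by induction on $r$.

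The obstacle is really this cancellation/unique-factorization step: existence of the factorization is given, but ruling out nontrivial coincidences between different products of T-primes requires showing the T-prime ideals are pairwise non-comparable in the multiplicative semigroup and that no T-prime decomposes as a product of two minimal T-ideals. This is precisely the content of \cite[Corollary 8.5.5]{GZ5} (and of the Bergman--Lewin result), so the cleanest route is to cite it, which matches the way the authors phrase the lemma.
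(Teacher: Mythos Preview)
Your proposal is correct and matches the paper's treatment exactly: the paper does not give an independent proof of this lemma but simply cites \cite[Corollary 8.5.5, p.~210]{GZ5}, noting in addition (as you do) that freeness can alternatively be deduced from Bergman--Lewin \cite[Theorem~7]{BeL}. Your surrounding discussion of the grading via additivity of the exponent and of the unique-factorization issue is helpful context, but the bottom line---cite the result---is precisely what the paper does.
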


The asymptotics of the codimension sequence of the T-prime ideals was found by Regev \cite{Re2} for
$T(M_m(K))$ and by Berele and Regev \cite{BR} for $T(M_m(E))$ and $T(M_{a,b})$.
It follows from there that
\[
\exp(M_m(K))=m^2,\quad \exp(M_m(E))=2m^2,\quad \exp(M_{a,b})=(a+b)^2.
\]
We shall consider two kinds of generating functions -- for varieties of finite and of infinite basic rank. We fix the notation
\[
a^{(\text{\rm f.g. codim})}(t)=\sum_{m\geq 1}t^{\exp(M_m(K))}=\sum_{m\geq 1}t^{m^2}
\]
for the generating function for the exponents of the codimension sequences of T-prime varieties in the case of finite basic rank and
\[
a^{(\text{\rm codim})}(t)=\sum_{m\geq 1}t^{\exp(M_m(K))}+\sum_{m\geq 1}t^{\exp(M_m(E))}+\sum_{0<a\leq b}t^{\exp(M_{a,b})}
\]
\[
=\sum_{m\geq 1}t^{m^2}+\sum_{m\geq 1}t^{2m^2}+\sum_{1\leq a\leq b}t^{(a+b)^2}
\]
in the case of all T-prime varieties. Similarly,
\[
b^{(\text{\rm f.g. codim})}(t)=\sum_{n\geq 0}b^{(\text{\rm f.g. codim})}_nt^n,
\]
\[
b^{(\text{\rm codim})}(t)=\sum_{n\geq 0}b^{(\text{\rm codim})}_nt^n
\]
are the corresponding generating functions for the number of minimal varieties.

\begin{theorem}\label{main theorem}
Let $b^{(\text{\rm f.g. codim})}(t)$ and $b^{(\text{\rm codim})}(t)$ be, respectively, the generating functions
counting the number of minimal varieties $\mathfrak R$ of associative algebras
over a field $K$ of characteristic $0$ of a given exponent for the varieties of finite basic rank
and for all minimal varieties. Then
\[
\lim_{n\to\infty}\sqrt[n]{b_n^{(\text{\rm f.g. codim})}}\text{ and }\lim_{n\to\infty}\sqrt[n]{b_n^{(\text{\rm codim})}}
\]
exist and are equal to $\alpha^{-1}$, where $\alpha$ is, respectively, the positive solution of the equation
$a^{(\text{\rm f.g. codim})}(t)=1$ and $a^{(\text{\rm codim})}(t)=1$.
\end{theorem}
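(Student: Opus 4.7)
The plan is to reduce the statement to Theorem \ref{existence of exponent} via the free graded semigroup structure of Lemma \ref{free graded semigroup of minimal varieties}. Since that lemma tells us that the semigroup of T-ideals of minimal varieties is freely generated (with $\deg = \exp$) by the T-prime ideals, a minimal variety of exponent $n$ is precisely an ordered word in T-prime ideals whose exponents sum to $n$. This is exactly the data of a colored $A$-composition of $n$, where $A$ is the multiset whose elements are the exponents of the T-prime generators counted with the multiplicity given by the number of distinct T-prime ideals having that exponent. Thus in the finite basic rank case the relevant multiset is $A^{(\text{\rm f.g.})}=\{m^2:m\geq 1\}$ (one color per $m$), with generating function $a^{(\text{\rm f.g. codim})}(t)=\sum_{m\geq 1}t^{m^2}$, while in the general case we additionally include one element $2m^2$ for each $M_m(E)$ and one element $(a+b)^2$ for each ordered pair $1\le a\le b$, which gives exactly the series $a^{(\text{\rm codim})}(t)$ displayed before the theorem. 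By Lemma \ref{generating function for colored compositions} the corresponding $b(t)$ is $1/(1-a(t))$, matching the definitions of $b^{(\text{\rm f.g. codim})}(t)$ and $b^{(\text{\rm codim})}(t)$.

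Next I would verify the two hypotheses required by Theorem \ref{existence of exponent}. First, the greatest common divisor of the elements of each multiset equals $1$, because $M_1(K)=K$ contributes the element $1=1^2$ to both $A^{(\text{\rm f.g.})}$ and the multiset associated to $a^{(\text{\rm codim})}(t)$. Second, the radius of convergence $\varrho$ of each series $a(t)$ is equal to $1$: the series $\sum_{m\ge 1}t^{m^2}$ converges for $|t|<1$ (dominated by a geometric series) and diverges at $t=1$; the same reasoning applies termwise to $\sum_{m\ge 1}t^{2m^2}$ and, after rewriting $\sum_{1\le a\le b}t^{(a+b)^2}=\sum_{k\ge 2}\lfloor k/2\rfloor t^{k^2}$, to the third summand of $a^{(\text{\rm codim})}(t)$. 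So $\varrho=1$ in both cases.

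With these hypotheses in place, Lemma \ref{Abel's theorem} guarantees that each of the equations $a^{(\text{\rm f.g. codim})}(t)=1$ and $a^{(\text{\rm codim})}(t)=1$ has a unique positive solution $\alpha\in(0,1)$, and Theorem \ref{existence of exponent} gives the existence of
\[
\lim_{n\to\infty}\sqrt[n]{b_n^{(\text{\rm f.g. codim})}}\quad\text{and}\quad \lim_{n\to\infty}\sqrt[n]{b_n^{(\text{\rm codim})}},
\]
each equal to $\alpha^{-1}$ for the corresponding $\alpha$. This completes the argument.

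The only genuinely non-routine point is the first paragraph, where one must justify that the free-semigroup description of Lemma \ref{free graded semigroup of minimal varieties} really translates, without loss, into the colored-composition counting of Section \ref{Section Colored compositions}: one needs the bijection (minimal varieties of exponent $n$) $\leftrightarrow$ (ordered tuples of T-prime factors with total exponent $n$) to be exact, so that multiplicities of equal exponents get recorded as distinct colors and no overcounting occurs. Once that correspondence is pinned down cleanly, the remainder is a direct verification of hypotheses and an invocation of Theorem \ref{existence of exponent}.
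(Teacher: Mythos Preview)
Your proposal is correct and follows essentially the same route as the paper: reduce to Theorem~\ref{existence of exponent} via the free graded semigroup description, check that $\gcd=1$ because $\exp(M_1(K))=1$, and verify that each generating series has radius of convergence $1$. The only cosmetic difference is in handling the summand $\sum_{1\le a\le b}t^{(a+b)^2}$: you rewrite it exactly as $\sum_{k\ge 2}\lfloor k/2\rfloor t^{k^2}$, while the paper instead bounds it coefficientwise by $\sum_{m\ge 1}(m-1)t^{m^2}$ and then by a derivative of a $0$--$1$ series; both arguments yield the same conclusion.
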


\begin{proof}
Since $\exp(M_1(K))=\exp(K)=1$,
the greatest common divisor of the indices of the nonzero coefficients of the power series $a^{(\text{\rm f.g. codim})}(t)$
and $a^{(\text{\rm codim})}(t)$ is equal to 1 and we can apply Theorem \ref{existence of exponent}.
Since the coefficients of the power series
$\displaystyle \sum_{m\geq 1}t^{m^2}$ and $\displaystyle \sum_{m\geq 1}t^{2m^2}$ are equal to 0 or 1,
their radius of convergence is equal to 1. This completes the case of minimal varieties of finite basic rank.
For the third power series $\displaystyle \sum_{1\leq a\leq b}t^{(a+b)^2}$ we have that
\[
\sum_{1\leq a\leq b}t^{(a+b)^2}\preceq\sum_{m\geq 1}(m-1)t^{m^2}\preceq \frac{d}{dt}\left(\sum_{m\geq 1}t^{m^2+1}\right)
\]
where $u(t)\preceq v(t)$ means that the coefficients of $u(t)$ are less or equal to the corresponding coefficients of $v(t)$.
This implies that the radius of convergence of $a^{(\text{\rm codim})}(t)$ is also equal to 1.
\end{proof}

\begin{theorem}\label{transcendental series for exponents}
The power series $b^{(\text{\rm f.g. codim})}(t)$ and $b^{(\text{\rm codim})}(t)$ are transcendental over ${\mathbb Q}(t)$.
\end{theorem}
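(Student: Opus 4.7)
The plan is to reduce the transcendence of $b^{(\text{\rm f.g. codim})}(t)$ and $b^{(\text{\rm codim})}(t)$ to that of the associated series $a^{(\text{\rm f.g. codim})}(t)$ and $a^{(\text{\rm codim})}(t)$ via the identity $b(t)=1/(1-a(t))$ of Lemma \ref{generating function for colored compositions}, and then to show that each $a$-series is lacunary and invoke Mahler's Theorem \ref{transcendental lacunary series}.

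The reduction step is immediate: from $b=1/(1-a)$ one also obtains $a=1-1/b$, so ${\mathbb Q}(t,a)={\mathbb Q}(t,b)$. Hence $b$ is algebraic over ${\mathbb Q}(t)$ if and only if $a$ is, and it suffices to prove that $a^{(\text{\rm f.g. codim})}(t)$ and $a^{(\text{\rm codim})}(t)$ are transcendental. Both series have nonnegative integer coefficients, so Theorem \ref{transcendental lacunary series} applies as soon as lacunarity is verified.

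For $a^{(\text{\rm f.g. codim})}(t)=\sum_{m\geq 1}t^{m^2}$ lacunarity is immediate, since the nonzero coefficients occur precisely at the squares and the gap $2m+1$ between $m^2$ and $(m+1)^2$ tends to infinity. For $a^{(\text{\rm codim})}(t)$, the set of exponents carrying a nonzero coefficient is $S=\{m^2:m\geq 1\}\cup\{2m^2:m\geq 1\}$, and the subtlety is that squares and twice-squares can be arbitrarily close. I plan to handle this by a density count: the number of twice-squares in $[1,(M+1)^2]$ is at most $(M+1)/\sqrt{2}$, so the number of $m\leq M$ for which the open interval $(m^2,(m+1)^2)$ contains a twice-square is at most $(M+1)/\sqrt{2}$. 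Therefore at least $(1-1/\sqrt{2})M+O(1)$ values of $m\leq M$ satisfy $(m^2,(m+1)^2)\cap S=\emptyset$, and for every such $m$ this open interval is a gap of $S$ of length $2m$. Ordering these ``good'' indices as $m_1<m_2<\cdots$ and taking the blocks of $S$ separated by the gaps $(m_i^2,(m_i+1)^2)$ produces sequences $p_i,q_i$ as in the definition of lacunarity with $p_{i+1}-q_i\geq 2m_i+1\to\infty$.

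The main obstacle is precisely the lacunarity check for $a^{(\text{\rm codim})}(t)$, since the interleaving of squares and twice-squares could \emph{a priori} prevent the appearance of long zero blocks; the fact that twice-squares are asymptotically sparser than squares (by the factor $1/\sqrt{2}$) is what rescues the argument. Once lacunarity of both $a$-series is established, Theorem \ref{transcendental lacunary series} yields their transcendence over ${\mathbb Q}(t)$, and the reduction in the first step transfers it to $b^{(\text{\rm f.g. codim})}(t)$ and $b^{(\text{\rm codim})}(t)$.
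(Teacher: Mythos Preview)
Your proposal is correct and follows essentially the same strategy as the paper: reduce via $b=1/(1-a)$ to the transcendence of the $a$-series, then verify lacunarity and invoke Theorem~\ref{transcendental lacunary series}. The only noteworthy variation is in the lacunarity check for $a^{(\text{\rm codim})}(t)$: the paper looks at each interval $[2i^2,2(i+1)^2]$ between consecutive twice-squares, observes that it contains at most two squares (since $(i\sqrt{2},(i+1)\sqrt{2})$ has length $\sqrt{2}$), and hence splits into at most three subintervals, the longest of which has length $\geq 2(2i+1)/3\to\infty$; you instead look at intervals $(m^2,(m+1)^2)$ between consecutive squares and use a density count to show that a positive proportion of them are free of twice-squares. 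Both arguments are elementary and yield the required arbitrarily long gaps; the paper's version gives a gap in \emph{every} window $[2i^2,2(i+1)^2]$, while yours is slightly less explicit but perhaps more robust as a template (it only uses that twice-squares are sparser than squares by a fixed factor).
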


\begin{proof}
It is sufficient to establish the transcendence of the series $a^{(\text{\rm f.g. codim})}(t)$ and $a^{(\text{\rm codim})}(t)$.
Obviously the series $a^{(\text{\rm f.g. codim})}(t)$ is lacunary. It is sufficient to choose $p_i=q_i=i^2$. Then
\[
\lim_{i\to\infty}(p_{i+1}-q_i)=\lim_{i\to\infty}(2i+1)=\infty
\]
and the transcendence of $a^{(\text{\rm f.g. codim})}(t)$ follows immediately from Theorem \ref{transcendental lacunary series}.
For the proof of the transcendence of $a^{(\text{\rm codim})}(t)$ we shall show that this series is also lacunary.
Let $j^2$ be between $2i^2$ and $2(i+1)^2$. Then $i\sqrt{2}<j<(i+1)\sqrt{2}$, i.e. $j$ belongs to the interval $(i\sqrt{2},(i+1)\sqrt{2})$
of length $\sqrt{2}$. Hence there is an least one and not more than two integers $j$ in this interval.
Such $j$ (or $j'<j''$) divide the interval $[2i^2,2(i+1)^2]$ of length $2(2i+1)$ in two parts $[2i^2,j^2]$, $[j^2,2(i+1)^2]$
(or in three parts $[2i^2,(j')^2]$, $[(j')^2,(j'')^2]$, $[(j'')^2,2(i+1)^2]$). Denoting the longest interval by $[q_i,p_{i+1}]$
we obtain that
\[
\lim_{i\to\infty}(p_{i+1}-q_i)\geq\lim_{i\to\infty}\frac{2(2i+1)}{3}=\infty
\]
which completes the proof.
\end{proof}

\begin{remark}
Easy computations show that $\alpha\approx 0.7054$ for the solution of the equation $a^{(\text{\rm f.g. codim})}(t)=1$
and hence $\displaystyle \lim_{n\to\infty}\sqrt[n]{b_n^{(\text{\rm f.g. codim})}}\approx 1.4176$.
On the other hand, the first values $\displaystyle \sqrt[n]{b_n^{(\text{\rm f.g. codim})}}$ for $n=1,2,\ldots,16$, are approximately equal to
\[
1,\ 1,\ 1,\ 1.1892,\ 1.2457,\ 1.2599,\ 1.2584,\ 1.2753,\ 1.3052,\ 1.3195,
\]
\[
1.3244,\ 1.3276,\ 1.3355,\ 1.3428,\ 1.3478,\ 1.3515,
\]
\[
\sqrt[24]{b_{24}^{(\text{\rm f.g. codim})}}\approx 1.3732.
\]
\end{remark}

We believe that the limits $\displaystyle \lim_{n\to\infty}\sqrt[n]{b_n^{(\text{\rm f.g. codim})}}$
and $\displaystyle \lim_{n\to\infty}\sqrt[n]{b_n^{(\text{\rm codim})}}$ are transcendental numbers
but cannot prove it.
In the case of finite basic rank this can be restated as the following problem.

\begin{problem}
Let $b_n$ be the number of $A$-compositions of $n$ for $A=\{1^2,2^2,\ldots\}$.
Is the limit $\displaystyle \lim_{n\to\infty}\sqrt[n]{b_n}$ transcendental?
\end{problem}

Recall the definition of the Gelfand-Kirillov dimension $\text{GKdim}(R)$ of a finitely generated algebra $R$.
If $R$ is generated by a finite dimensional vector space $V$, then the growth function of $R$ with respect to $V$ is
\[
g_V(n)=\dim(V^0+V^1+\cdots +V^n),\text{ where }V^j=\text{span}\{r_{i_1}\cdots r_{i_j}\mid r_{ik}\in V\},
\]
and the Gelfand-Kirillov dimension of $R$ is
\[
\text{GKdim}(R)=\limsup_{n\to\infty}(\log_n(g_V(n)))
\]
if this upper bound exists. For a background on Gelfand-Kirillov dimension we refer to the book by Krause and Lenagan \cite{KL}
and for applications to PI-algebras the books \cite{D4, GZ5} and the survey article \cite{D3}.
In particular by a theorem of Berele \cite{B1}, $\text{GKdim}(R)<\infty$ for any finitely generated PI-algebra $R$.
Kemer \cite{K3} proved that for any proper subvariety $\mathfrak R$ of the variety of all associative algebras
the relatively free algebra $F_d({\mathfrak R})$ is representable, i.e. there exists an extension $L$ of the base field $K$
and an $m\geq 1$  such that $F_d({\mathfrak R})$ is isomorphic to a subalgebra of $M_m(L)$ considered as a $K$-algebra.
Markov \cite{M} showed that the Gelfand-Kirillov dimension of finitely generated representable algebras is an integer.
This gives that for the relatively free algebra $F_d({\mathfrak R})$ the Gelfand-Kirillov dimension $\text{GKdim}(F_d({\mathfrak R}))$ is an integer.

Now we shall apply our approach for the behavior of the number of minimal varieties of a given exponent
to the number $b_n^{(d\text{\rm -f.g. generated})}$ of minimal varieties $\mathfrak R$ of finite basic rank such that for a fixed integer $d>1$
the Gelfand-Kirillov dimension of the $d$-generated relatively free algebra $\text{GKdim}(F_d({\mathfrak R}))$ is equal to $n$,
and the similar number $b_n^{(d\text{\rm -generated})}$ for all minimal varieties.

The Gelfand-Kirillov dimension of the relatively free algebras of the T-prime varieties was calculated by Procesi \cite{P}
for $\text{\rm var}(M_m(K))$ and by Berele \cite{B2} for $\text{\rm var}(M_m(E))$ and $\text{\rm var}(M_{a,b})$:
\[
\text{\rm GKdim}(F_d(\text{\rm var}(M_m(K))))=(d-1)m^2+1,
\]
\[
\text{\rm GKdim}(F_d(\text{\rm var}(M_m(E))))=(d-1)m^2+1,
\]
\[
\text{\rm GKdim}(F_d(\text{\rm var}(M_{a,b})))=(d-1)(a^2+b^2)+2,
\]
We define
\[
a^{(d\text{\rm -f.g. generated})}(t)=\sum_{m\geq 1}t^{\text{\rm GKdim}(F_d(\text{\rm var}(M_m(K))))}=\sum_{m\geq 1}t^{(d-1)m^2+1},\quad d\geq 2,
\]
for the generating function of the Gelfand-Kirillov dimension of the $d$-generated relatively free algebras
in the case of T-prime varieties of finite basic rank and
\[
a^{(d\text{\rm -generated})}(t)=2\sum_{m\geq 1}t^{(d-1)m^2+1}+\sum_{1\leq a\leq b}t^{(d-1)(a^2+b^2)+2},\quad d\geq 2,
\]
for all T-prime varieties.

\begin{theorem}\label{asymptotics for GKdim}
Let $b^{(d\text{\rm -f.g. generated})}(t)$ and $b^{(d\text{\rm -generated})}(t)$ be, respectively, the generating functions
counting the number of minimal varieties $\mathfrak R$ of associative algebras
over a field $K$ of characteristic $0$ with $d$-generated relatively free algebras of a given Gelfand-Kirillov dimension
for the varieties of finite basic rank
and for all minimal varieties. Then the limits
\[
\lim_{n\to\infty}\sqrt[n]{b_n^{(d\text{\rm -f.g. generated})}}\text{ and }\lim_{n\to\infty}\sqrt[n]{b_n^{(d\text{\rm -generated})}}
\]
for the coefficients of these generating functions exist and are equal to $\alpha^{-1}$,
where $\alpha$ is equal, respectively, to the positive solution of the equation
$a^{(d\text{\rm -f.g. generated})}(t)=1$ and $a^{(d\text{\rm -generated})}(t)=1$.
\end{theorem}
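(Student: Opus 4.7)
The plan is to mirror the proof of Theorem \ref{main theorem}, now using $\text{GKdim}(F_d(\mathfrak{R}))$ in place of $\exp(\mathfrak{R})$ as the grading. The essential extra ingredient is the additivity identity
\[
\text{GKdim}(F_d(\mathfrak{R}))=\sum_{j=1}^{k}\text{GKdim}(F_d(\text{var}(I_j)))
\]
whenever $T(\mathfrak{R})=I_1\cdots I_k$ is the factorization into T-prime ideals provided by Lemma \ref{free graded semigroup of minimal varieties}. Granted this, the multisets whose generating functions are $a^{(d\text{\rm -f.g. generated})}(t)$ and $a^{(d\text{\rm -generated})}(t)$ precisely enumerate (with the stated multiplicities) the T-prime generators of the free semigroup of Lemma \ref{free graded semigroup of minimal varieties}, graded now by $\text{GKdim}(F_d)$, and the minimal varieties with $\text{GKdim}(F_d(\mathfrak{R}))=n$ are in bijection with $A$-compositions of $n$. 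Lemma \ref{generating function for colored compositions} then yields $b^{(d\text{\rm -f.g. generated})}(t)=1/(1-a^{(d\text{\rm -f.g. generated})}(t))$ and $b^{(d\text{\rm -generated})}(t)=1/(1-a^{(d\text{\rm -generated})}(t))$.

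It remains to verify the hypotheses of Theorem \ref{existence of exponent}. For the gcd condition, the exponents $(d-1)m^2+1$ evaluated at $m=1,2,3$ are $d$, $4d-3$, $9d-8$; any common divisor $p$ of these must divide $d$, and consequently also $(4d-3)-4d=-3$ and $(9d-8)-9d=-8$, so $p\mid\gcd(3,8)=1$. These same three values appear among the exponents of $a^{(d\text{\rm -generated})}(t)$, so its gcd is $1$ as well. For the radius of convergence, $\sum_{m\geq 1}t^{(d-1)m^2+1}$ has coefficients in $\{0,1\}$, while in the third summand $\sum_{1\leq a\leq b}t^{(d-1)(a^2+b^2)+2}$ of $a^{(d\text{\rm -generated})}(t)$ the coefficient of $t^n$ is bounded by the number of pairs $(a,b)$ with $a\leq b$ and $a^2+b^2=(n-2)/(d-1)$, which is $O(\sqrt{n})$. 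Consequently both $a$-series have radius of convergence $1$ and diverge at $t=1$, so Theorem \ref{existence of exponent} supplies the two stated limits, each equal to $\alpha^{-1}$ for the corresponding positive solution of $a(t)=1$.

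The main obstacle is the additivity of $\text{GKdim}(F_d)$ under products of T-prime ideals. This does not follow formally from Lemma \ref{free graded semigroup of minimal varieties}, which only grades the semigroup of T-ideals by the exponent. To establish it, one appeals to the structural description of $F_d(\mathfrak{R})$ for a minimal variety \textemdash{} concretely, its realization as a subalgebra of a block upper-triangular construction whose diagonal blocks correspond to the T-prime factors $I_j$ \textemdash{} and combines it with the formulas of Procesi \cite{P} and Berele \cite{B2} for the diagonal blocks. The offsets $+1$ and $+2$ built into the definitions of the two $a$-series are exactly the per-block contributions, so they aggregate additively under composition of T-ideals. Once additivity is granted, the rest of the argument is a direct transcription of the proof of Theorem \ref{main theorem}.
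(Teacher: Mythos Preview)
Your proposal is correct and follows essentially the same route as the paper: reduce to Theorem \ref{existence of exponent} by checking the gcd condition and that the radius of convergence of the relevant $a$-series is $1$. The one point of divergence is how the additivity of $\text{GKdim}(F_d)$ under products of T-prime ideals is handled: the paper does not argue via a block upper-triangular model but simply invokes \cite[Corollary 3.14]{D3}, which asserts directly that the T-ideals of minimal varieties form a free graded semigroup with grading $\deg T(\mathfrak R)=\text{GKdim}(F_d(\mathfrak R))$. Your gcd computation and your $O(\sqrt n)$ bound on the coefficients of the third summand are both valid; the paper instead just asserts the gcd is $1$ and bounds the third summand by the product $\sum_{a\geq 1}t^{(d-1)a^2+1}\sum_{b\geq 1}t^{b^2+1}$, which is equivalent for the purpose at hand.
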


\begin{proof}
As in the proof of Theorem \ref{main theorem} the T-ideals of the minimal varieties
form a free graded semigroup freely generated by the T-prime ideals with grading defined by
$\deg T({\mathfrak R})=\text{\rm GKdim}(F_d({\mathfrak R}))$, see, e.g. \cite[Corollary 3.14]{D3}.
Since the greatest common divisor of the indices $(d-1)m^2+1$ of the coefficients of the power series
$a^{(d\text{\rm -f.g. generated})}(t)$ and $a^{(d\text{\rm -generated})}(t)$ is equal to 1,
it is sufficient to show that the radius of convergence of these power series is equal to 1.
This is obvious for the generating functions for $\text{\rm GKdim}(F_d(\text{\rm var}(M_m(K))))$
and for $\text{\rm GKdim}(F_d(\text{\rm var}(M_m(E))))$ because its coefficients are equal to 0 or 1. Since
\[
\sum_{1\leq a\leq b}t^{(d-1)(a^2+b^2)+2}\preceq \sum_{a\geq 1}t^{(d-1)a^2+1}\sum_{b\geq 1}t^{b^2+1}
\]
and the radius of convergence of the factors $\sum_{a\geq 1}t^{(d-1)a^2+1}$ and $\sum_{b\geq 1}t^{b^2+1}$ is equal to 1,
the same holds for the generating function for $\text{\rm GKdim}(F_d(\text{\rm var}(M_{a,b})))$.
\end{proof}

\begin{remark}
Repeating the arguments for the proof of the transcendence of the series $b^{(\text{\rm f.g. codim})}(t)$
in Theorem \ref{transcendental series for exponents} we can prove the transcendence of the series
$b^{(d\text{\rm -f.g. generated})}(t)$. Unfortunately, we cannot find arguments for the transcendence of $b^{(d\text{\rm -generated})}(t)$
although we believe that this is the case.
Similarly, we do not know whether the limits in Theorem \ref{asymptotics for GKdim} are
transcendental.
\end{remark}

\section{Free graded semigroups and free associative algebras}\label{Section Free semigroups}

Let $Y$ be an arbitrary set and let $\langle Y\rangle$ be the free unitary semigroup freely generated by the set $Y$.
As usually we say that $\langle Y\rangle$ is graded if there is a map $\deg:\langle Y\rangle\to{\mathbb N}=\{0,1,2,\ldots\}$
such that
\[
\deg(uv)=\deg(u)+\deg(v),\quad u,v\in\langle Y\rangle.
\]
In the sequel we assume that $\deg(y)>0$ for all $y\in Y$ and the set of the generators of degree $k$
\[
Y_k=\{y\in Y\mid \deg(y)=k\},\quad k=1,2,\ldots,
\]
is finite for any $k=1,2,\ldots$. Then the set $\langle Y\rangle^{(n)}$ of elements of degree $n$ in $\langle Y\rangle$ is also finite.
We fix the notation $\vert Y_k\vert=d_k$ and $\vert \langle Y\rangle^{(n)}\vert=g_n$, and introduce the generating functions
\[
d(t)=\sum_{k\geq 1}d_kt^k,\quad g(t)=\sum_{n\geq 0}g_nt^n.
\]

The following lemma is well known.

\begin{lemma}\label{1-1 correspondence with A-compositions}
Let $\langle Y\rangle$ be a free unitary graded semigroup, let $a(t)$ be the generating function which counts
the number of free generators of a given degree, and let $A$ be the associated to $a(t)$ multiset.
Then there is a canonical bijective correspondence between the elements of degree $n$ in $\langle Y\rangle$
and the $A$-compositions of $n$.
\end{lemma}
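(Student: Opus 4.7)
The plan is to write down the bijection explicitly and then observe that both maps are well-defined inverses of each other by the freeness of $\langle Y\rangle$. The key point is that the combinatorial data of an $A$-composition (a sequence of parts from $A$, each carrying a color chosen among the multiplicities) is exactly the data needed to specify a word in $\langle Y\rangle$ of the prescribed total degree.

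First I would fix, for each $k\geq 1$, an arbitrary enumeration $Y_k=\{y_{k,1},\ldots,y_{k,d_k}\}$. In the language of colored compositions this amounts to choosing, once and for all, a concrete set of $a_k=d_k$ colors attached to the part $k$ of the multiset $A$. Any element $w\in\langle Y\rangle^{(n)}$, being a nonempty word over the alphabet $Y$, admits, by the freeness of $\langle Y\rangle$, a unique factorization
\[
w=y_{k_1,c_1}y_{k_2,c_2}\cdots y_{k_m,c_m},\quad 1\leq c_j\leq d_{k_j},
\]
and the grading condition $\deg(uv)=\deg(u)+\deg(v)$ together with $\deg(y_{k_j,c_j})=k_j$ forces $k_1+k_2+\cdots+k_m=n$.

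Next I would define $\Phi(w)=(\nu_1,\ldots,\nu_m)$, where $\nu_j$ is the part $k_j$ taken in color $c_j$; this is by construction an $A$-composition of $n$. In the reverse direction, given an $A$-composition $(\nu_1,\ldots,\nu_m)$ of $n$, each $\nu_j$ carries a value $k_j\in\mathbb N$ and a color $c_j\in\{1,\ldots,d_{k_j}\}$, so one sets $\Psi(\nu_1,\ldots,\nu_m)=y_{k_1,c_1}\cdots y_{k_m,c_m}\in\langle Y\rangle^{(n)}$. That $\Phi\circ\Psi$ and $\Psi\circ\Phi$ are identities is immediate from the uniqueness of the factorization in the free semigroup.

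There is no real obstacle here; the only thing to be careful about is that the statement becomes tautological once one notes that an $A$-composition is, by definition, a finite sequence of colored parts, and an element of a free semigroup on a graded alphabet is, by definition, a finite sequence of colored generators whose degrees sum to the total degree. The word "canonical" in the lemma is exactly reflecting the fact that, once the enumeration of each $Y_k$ is fixed, the bijection $\Phi$ does not involve any further choice.
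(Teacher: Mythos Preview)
Your proof is correct and follows exactly the same approach as the paper: enumerate the generators $y_{k,i}$ of each degree $k$, identify them with the $d_k$ colored copies of $k$ in $A$, and send a word $y_{k_1,c_1}\cdots y_{k_m,c_m}$ to the composition $(k_1^{(c_1)},\ldots,k_m^{(c_m)})$. The paper's version is terser, simply writing down the bijection without spelling out the inverse, but the content is identical.
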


\begin{proof}
If $y_{ki}$, $i=1,\ldots,d_k$, are the free generators of degree $k$ in $Y$, and $k^{(i)}$, $i=1,\ldots,d_k$, are the $d_k$ copies of $k$ in $A$,
then we define a bijective map sending the element $u=y_{k_1i_i}\cdots y_{k_pi_p}\in \langle Y\rangle$ to the $A$-composition
$(k_1^{(i_i)},\ldots,k_p^{(i_p)})$.
\end{proof}

Hence we obtain immediately analogues of Theorem \ref{existence of exponent}, Corollaries \ref{transcendental radius of convergence}
and \ref{transcendental radius of convergence for A-compositions}, and Example \ref{Liouville number}.
We shall restate only Example \ref{Liouville number}.

\begin{example}\label{Liouville number for semigroups}
If the set $Y=\{y_{1!},y_{2!},\ldots\}$ consists of elements of degree $k!$, $k=1,2,\ldots$,
then for the sequence of the numbers $g_n$ of elements of degree $n$ in the free graded semigroup $\langle Y\rangle$ the limit
$\displaystyle \gamma=\lim_{n\to\infty}\sqrt[n]{g_n}$
exists and is a transcendental number.
\end{example}

Let $K$ be an arbitrary field. If $Y$ is a graded set, then the free associative algebra $K\langle Y\rangle$
is a $K$-vector space with the free semigroup $\langle Y\rangle$ as a basis.
Then the Hilbert series of $K\langle Y\rangle$ is the formal power series
\[
H(K\langle Y\rangle,t)=\sum_{n\geq 0}\dim(K\langle Y\rangle^{(n)})t^n,
\]
where $K\langle Y\rangle^{(n)}$ is the homogeneous component of degree $n$ of $K\langle Y\rangle$.
Clearly, $H(K\langle Y\rangle,t)$ is equal to the generating function $g(t)$
of the sequence $g_n=\vert \langle Y\rangle^{(n)}\vert$, $n=0,1,2,\ldots$.
Hence we can restate Theorem \ref{existence of exponent}, Corollaries \ref{transcendental radius of convergence}
and \ref{transcendental radius of convergence for A-compositions}, and Example \ref{Liouville number}
in the language of the coefficients of the Hilbert series of $K\langle Y\rangle$.

\section*{Acknowledgements}

An essential part of this project was carried out
during the Workshop ``Polynomial Identities in Algebras'',
September 16-20, 2019, organized by
Istituto Nazionale di Alta Matematica (INdAM), Roma, Italy,
and dedicated to the anniversary of Antonio Giambruno.
The author is very grateful to INdAM for the warm hospitality and the creative atmosphere during the Workshop.
Very special thanks are due to Olga Finogenova and Mikhail Zaicev. Without the fruitful discussions with them
and their valuable comments and suggestions during the Workshop this project would never start.

\end{document}